\numberwithin{equation}{section}
\newtheorem{theorem}{Theorem}
\theoremstyle{definition}
\theoremstyle{definition}
\begin{document}
\title{Some properties of rectangle and a random point}
\author[Quang Hung Tran]{Quang Hung Tran}
\address{High School for Gifted Students\\ Vietnam National University\\182 Luong The Vinh\\Hanoi\\Vietnam}
\email{tranquanghung@hus.edu.vn}
\subjclass[2010]{51M04, 51-03}
\date{\today}
\maketitle
\date{\today}
%----------additions
%\dedicatory{To my boss}
%%% ----------------------------------------------------------------------

\begin{abstract}We establish a relationship between the two important central lines of the triangle, the Euler line and the Brocard axis, in a configuration with an arbitrary rectangle and a random point. The classical Cartesian coordinate system method shows its strength in these theorems. Along with that, some related problems on rectangles and a random point are proposed with similar solutions using Cartesian coordinate system.
\end{abstract}
\maketitle

\section{Introduction}In the geometry of triangles, the Euler line \cite{4b} plays an important role and is almost the most classic concept in this field. The Euler line passes through the centroid, orthocenter, and circumcenter of the triangle. The symmedian point \cite{4c} of a triangle is the concurrent point of its symmedian lines, the Brocard axis \cite{4d} is the line connecting the circumcircle and the symmedian point of that triangle. The Brocard axis is also a central line that plays an important role in the geometry of triangle.

The coordinate method was invented by Ren\'e Descartes from the 17th century \cite{0,0a}, up to now, the classical coordinate method of Descartes is still one of the most important method of mathematics and geometry.

In this paper, we apply Cartesian coordinate system to solve an interesting theorem for an arbitrary rectangle and a random point in which two important central lines in the triangle are mentioned as follows. The idea of using the Cartesian coordinate system is also used in a similar way to solve the new problems we introduced in Section 3.

\section{Main theorem and proof}

\begin{theorem}\label{thm1}Let $ABCD$ be a rectangle with center $I$. Let $P$ be a random point in its plane.
	\begin{itemize}
	\item[1)] Euler line of triangles $PAB$ and $PCD$ meet at $Q$. Euler line of triangles $PBC$ and $PAD$ meet at $R$. Then, line $QR$ goes through center $I$.
	\item[2)] Brocard axis of triangles $PAB$ and $PCD$ meet at $M$. Brocard axis of triangles $PBC$ and $PAD$ meet at $N$. Then, line $MN$ goes through $P$.
	\end{itemize}
\end{theorem}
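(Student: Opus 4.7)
The abstract already flags the approach: a Cartesian bash using the symmetries of the rectangle to force cancellation. I would place $I$ at the origin with sides parallel to the axes, so
\[
A=(-a,-b),\quad B=(a,-b),\quad C=(a,b),\quad D=(-a,b),\qquad P=(p,q).
\]
Two scalars govern everything: the power $k:=p^{2}+q^{2}-a^{2}-b^{2}$ of $P$ with respect to the common circle through $A,B,C,D$, and $\sigma:=p^{2}+q^{2}+a^{2}+b^{2}$. The four squared distances $PA^{2},PB^{2},PC^{2},PD^{2}$ all take the uniform shape $\sigma\pm 2ap\pm 2bq$, and this is the basic building block.

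For part (1), both $AB$ and $CD$ are horizontal with perpendicular bisector $x=0$, so a one-line computation gives
\[
O_{PAB}=\Bigl(0,\tfrac{k}{2(q+b)}\Bigr),\qquad O_{PCD}=\Bigl(0,\tfrac{k}{2(q-b)}\Bigr),
\]
with centroids $\bigl(\tfrac{p}{3},\tfrac{q-2b}{3}\bigr)$ and $\bigl(\tfrac{p}{3},\tfrac{q+2b}{3}\bigr)$. I would write the two Euler lines in parametric form and solve a $2\times 2$ system for their intersection $Q$; the identities $(q+b)(q-b)=q^{2}-b^{2}$ and $(q+b)+(q-b)=2q$ produce a substantial collapse, and I expect $Q$ to emerge as a scalar multiple of $(-p,q)$. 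The parallel computation for $\triangle PAD$ and $\triangle PBC$ (obtained by reflecting the whole picture across $y=x$, i.e.\ swapping $a\leftrightarrow b$ and $p\leftrightarrow q$ together with $x\leftrightarrow y$) will yield $R$ as a scalar multiple of $(p,-q)$. Since $(-p,q)$ and $(p,-q)$ are antiparallel, $Q$ and $R$ both lie on the same line through the origin, which proves $I\in QR$.

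For part (2), the Brocard axis passes through the circumcenter $O$ and the symmedian point $K=\bigl(\sum \ell_{i}^{2}V_{i}\bigr)/\sum \ell_{i}^{2}$, where $\ell_{i}$ is the length of the side opposite $V_{i}$. The circumcenters are already computed in (1); the four symmedian points follow from the squared-distance formulas above, using the helpful identities $PA^{2}+PB^{2}=2\sigma+4bq$, $PA^{2}-PB^{2}=4ap$ (and their analogues for the three other triangles) to separate symmetric and antisymmetric parts. I would assemble the Brocard axes $O_{PAB}K_{PAB}$ and $O_{PCD}K_{PCD}$ explicitly, intersect them for $M$, and proceed analogously for $N$, aiming for formulas from which the proportionality of $P-M$ and $P-N$ is manifest.

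The main obstacle is the algebraic bulk of (2): $M$ and $N$ will be rational functions whose numerators and denominators are polynomials of moderate degree in $a,b,p,q$. What should keep the computation tractable is the same symmetry scaffolding that rescued (1): the reflection $x\mapsto -x$ swapping $A\leftrightarrow B$ and $C\leftrightarrow D$, together with the $y=x$ duality that sends the $M$-computation to the $N$-computation. These constraints should force cancellations parallel to those in (1) and leave formulas with the desired linear structure through $P$.
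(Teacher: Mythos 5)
Your proposal is correct and follows essentially the same route as the paper: a direct Cartesian computation of circumcenters, centroids, symmedian points, and the resulting Euler lines and Brocard axes, followed by intersecting them and checking collinearity. The only difference is cosmetic but pleasant --- you place the origin at $I$ rather than at $P$, and exploit the rectangle's reflection symmetries to get $Q$ and $R$ as scalar multiples of $(-p,q)$ and $(p,-q)$ (which I verified does work out, e.g.\ $Q=\tfrac{k}{4(q^{2}-b^{2})-3k}(-p,q)$), whereas the paper puts $P$ at the origin and grinds out all four lines explicitly in full generality.
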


\begin{proof}We will use Cartesian coordinate for the solutions. Since $ABCD$ is an arbitrary rectangle and $P$ is a random point, we can take $P(0,0)$, $A(a,b)$, $B(c,b)$, $C(c,d)$, and $D(a,d)$ for all real numbers $a$, $b$, $c$, and $d$. 
	
We get perpendicular bisectors $d_a$, $d_b$, $d_c$, and $d_d$ of $PA$, $PB$, $PC$, and $PD$, respectively, are
\begin{equation}d_a:\ y = \frac{a^{2} + b^{2}}{2b}-\frac{a}{b}\cdot x,
\end{equation}
\begin{equation}d_b:\ y = \frac{b^{2} + c^{2}}{2b}-\frac{c}{b}\cdot x,
\end{equation}
\begin{equation}d_c:\ y =  \frac{c^{2} + d^{2}}{2d}-\frac{c}{d}\cdot x,
\end{equation}
\begin{equation}d_d:\ y = \frac{a^{2} + d^{2}}{2d}-\frac{a}{d}\cdot x.
\end{equation}
Thus circumcenters $O_a$, $O_b$, $O_c$, and $O_d$ of triangles $PAB$, $PBC$, $PCD$, and $PDA$, respectively, are
\begin{equation}O_a=d_a\cap d_b=\left(\frac{a+c}{2}, \frac{-ac + b^{2}}{2b} \right),
\end{equation}
\begin{equation}O_b=d_b\cap d_c=\left(\frac{-bd + c^{2}}{2c}, \frac{b+d}{2}\right),
\end{equation}
\begin{equation}O_c=d_c\cap d_d=\left(\frac{a+c}{2}, \frac{-ac + d^{2}}{2d} \right),
\end{equation}
\begin{equation}O_d=d_d\cap d_a=\left(\frac{-bd+ a^{2}}{2a}, \frac{b+d}{2}\right).
\end{equation}

\begin{figure}[htbp]
	\begin{center}\scalebox{0.7}{\includegraphics{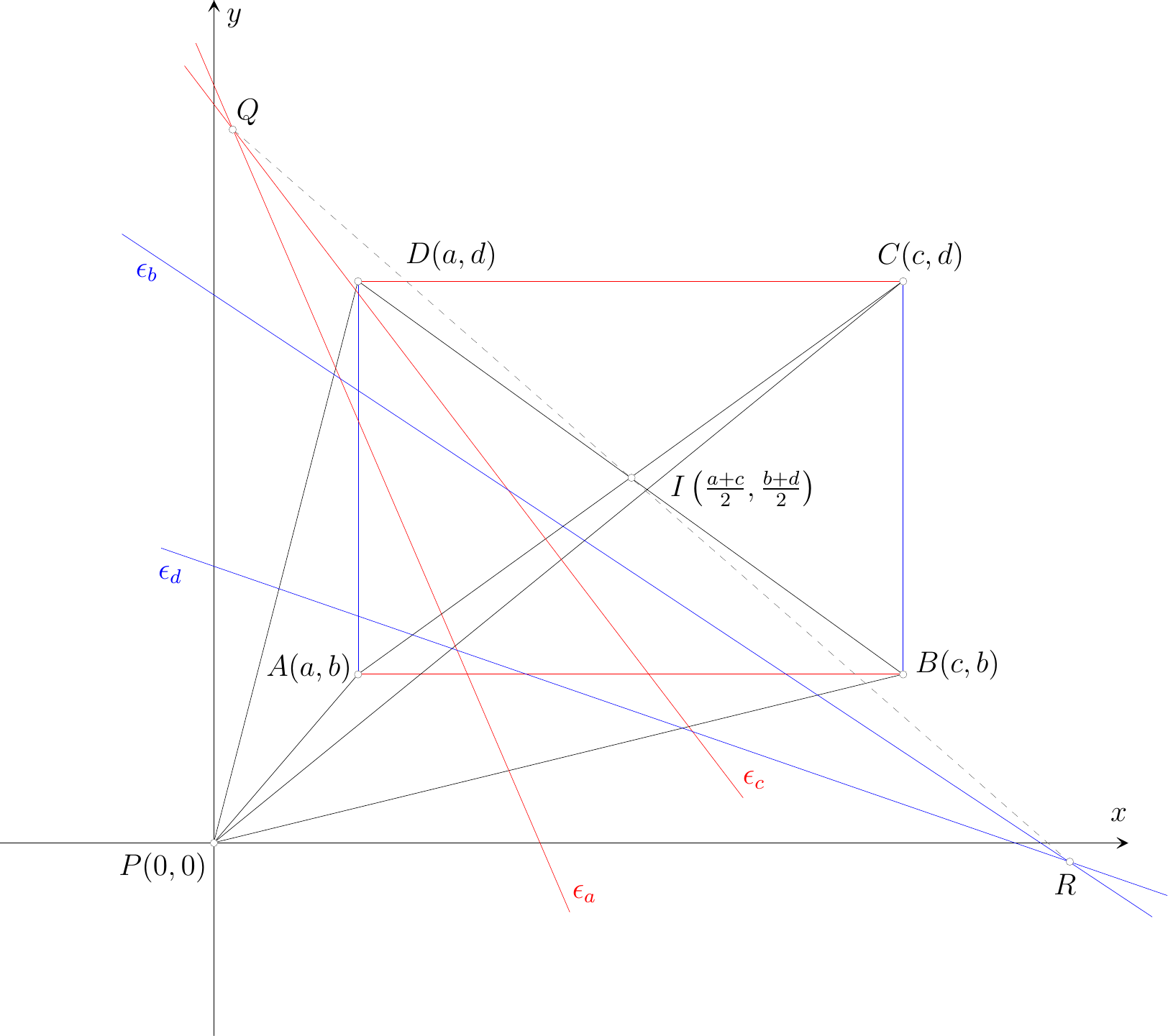}}\end{center}	
	\caption{Rectangle with Euler lines}
	\label{fig1}
\end{figure}

1) (See Figure \ref{fig1}). We have that centroid $G_a$, $G_b$, $G_c$, and $G_d$ of triangles $PAB$, $PBC$, $PCD$, and $PDA$, respectively, are
\begin{equation}G_a=\left(\frac{a + c}{3}, \frac{2b}{3} \right),
\end{equation}
\begin{equation}G_b=\left(\frac{2c}{3}, \frac{b + d}{3} \right),
\end{equation}
\begin{equation}G_c=\left(\frac{a + c}{3}, \frac{2d}{3} \right),
\end{equation}
\begin{equation}G_d=\left(\frac{2a}{3}, \frac{b + d}{3} \right).
\end{equation}
Therfore, Euler lines $\epsilon_a$, $\epsilon_b$, $\epsilon_c$, and $\epsilon_d$ of triangles $PAB$, $PBC$, $PCD$, and $PDA$, respectively, are
\begin{equation}\epsilon_a: \, y = \frac{b^{2} + ac}{b} - \frac{b^{2} + 3ac}{ab + bc}\cdot x,
\end{equation}
\begin{equation}\epsilon_b: \, y = \frac{bc^{2} + bd^{2} + b^{2}d + c^{2}d}{c^{2} + 3bd} - \frac{bc + cd}{c^{2} + 3bd}\cdot x,
\end{equation}
\begin{equation}\epsilon_c: \, y = \frac{d^{2} + ac}{d} - \frac{d^{2} + 3ac}{ad + cd}\cdot x,
\end{equation}
\begin{equation}\epsilon_d: \, y = \frac{bd^{2} + a^{2}b + a^{2}d + b^{2}d}{a^{2} + 3bd} - \frac{ab + ad}{a^{2} + 3bd}\cdot x.
\end{equation}
We get the intersections
\begin{equation}Q=\epsilon_a\cap\epsilon_c=\left(\frac{a^{2}c - abd + ac^{2} - bcd}{3ac - bd}, \frac{2abc + 2acd}{3ac - bd} \right),
\end{equation}
\begin{equation}R=\epsilon_b\cap\epsilon_d=\left(\frac{-2abd - 2bcd}{ac - 3bd}, \frac{abc + acd - b^{2}d - bd^{2}}{ac - 3bd} \right),
\end{equation}
and then the line connecting points $Q$ and $R$ is
\begin{equation}QR:\ y = -\frac{b+d}{a+c}x+b+d.
\end{equation}
Now it not hard to see that line $PQ$ goes through center $I\left(\frac{a+c}{2},\frac{b+d}{2}\right)$.

\begin{figure}[htbp]
	\begin{center}\scalebox{0.7}{\includegraphics{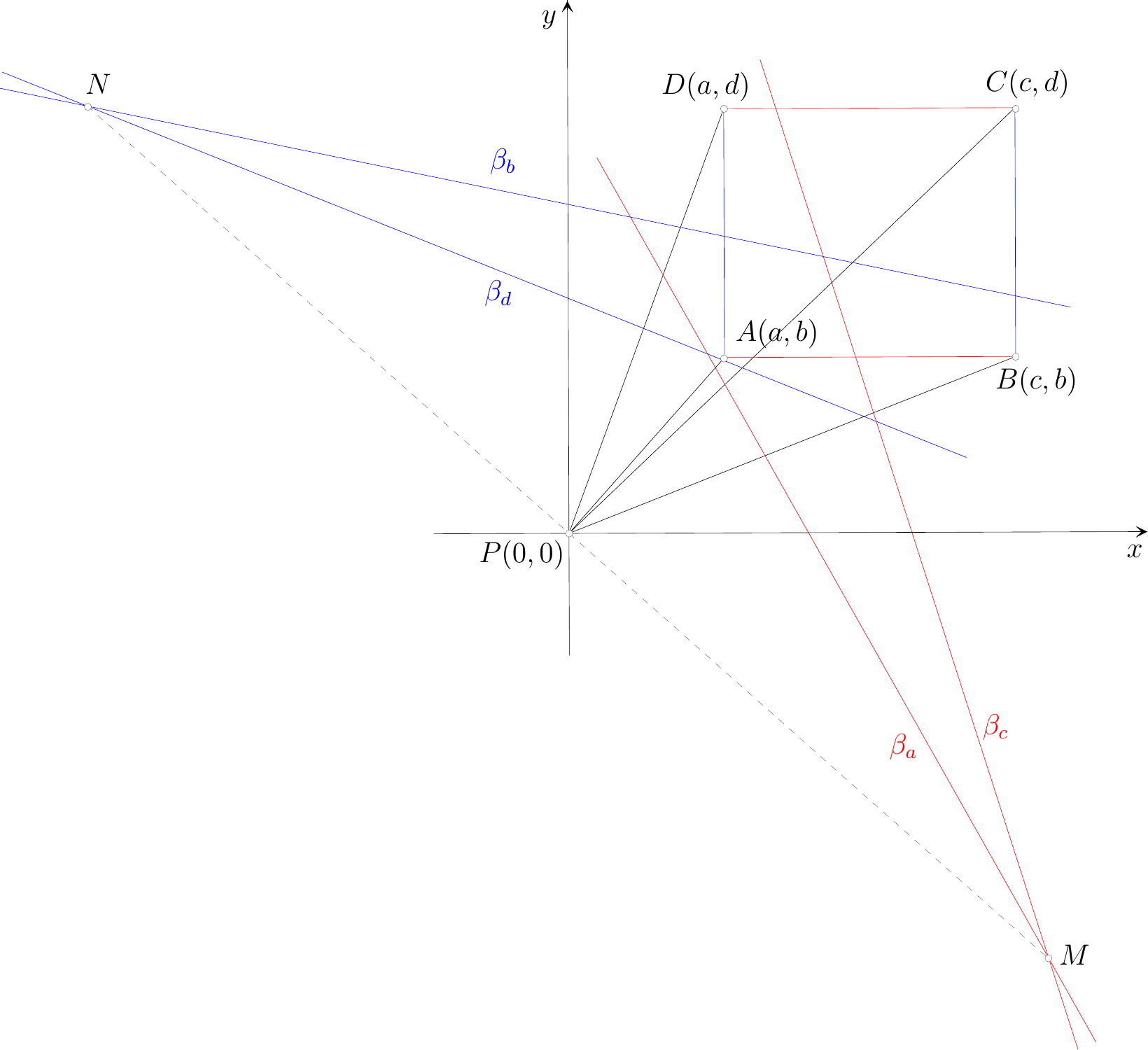}}\end{center}	
	\caption{Rectangle with Brocard axis}
	\label{fig2}
\end{figure}

2) (See Figure \ref{fig2}). Using the barycentric coordinates of symmedian point in \cite{4c}, we have that the coordinates of symmedian points $S_a$, $S_b$, $S_c$, and $S_d$ of triangles $PAB$, $PBC$, $PCD$, and $PDA$, respectively, are
\begin{equation}S_a=\frac{PA^2\cdot B+PB^2\cdot A+AB^2\cdot P}{PA^2+PB^2+AB^2}=\left(\frac{a^{2}c + ab^{2} + ac^{2} + b^{2}c}{2a^{2} - 2ac + 2b^{2} + 2c^{2}}, \frac{a^{2}b + 2b^{3} + bc^{2}}{2a^{2} - 2ac + 2b^{2} + 2c^{2}} \right),
\end{equation}
\begin{equation}S_b=\frac{PB^2\cdot C+PC^2\cdot B+BC^2\cdot P}{PB^2+PC^2+BC^2}=\left(\frac{b^{2}c + 2c^{3} + cd^{2}}{2b^{2} - 2bd + 2c^{2} + 2d^{2}}, \frac{b^{2}d + bc^{2} + bd^{2} + c^{2}d}{2b^{2} - 2bd + 2c^{2} + 2d^{2}} \right),
\end{equation}
\begin{equation}S_c=\frac{PC^2\cdot D+PD^2\cdot C+CD^2\cdot P}{PC^2+PD^2+CD^2}=\left(\frac{a^{2}c + ac^{2} + ad^{2} + cd^{2}}{2a^{2} - 2ac + 2c^{2} + 2d^{2}}, \frac{a^{2}d + c^{2}d + 2d^{3}}{2a^{2} - 2ac + 2c^{2} + 2d^{2}} \right),
\end{equation}
\begin{equation}S_d=\frac{PD^2\cdot A+PA^2\cdot D+DA^2\cdot P}{PD^2+PA^2+DA^2}=\left(\frac{2a^{3} + ab^{2} + ad^{2}}{2a^{2} + 2b^{2} - 2bd + 2d^{2}}, \frac{a^{2}b + a^{2}d + b^{2}d + bd^{2}}{2a^{2} + 2b^{2} - 2bd + 2d^{2}} \right).
\end{equation}
Therfore, Brocard axis $\beta_a$, $\beta_b$, $\beta_c$, and $\beta_d$ of triangles $PAB$, $PBC$, $PCD$, and $PDA$, respectively, are
\begin{equation}\beta_a: \, y = \frac{b^{4} + a^{2}b^{2} + a^{2}c^{2} + b^{2}c^{2}}{2bc^{2} + 2a^{2}b - 4abc} + \frac{-b^{4} - ac^{3} + a^{2}c^{2} - a^{3}c - 2ab^{2}c}{bc^{3} + a^{3}b - abc^{2} - a^{2}bc}\cdot x,
\end{equation}
\begin{equation}\beta_b: \, y = \frac{\begin{aligned}bc^{4} + b^{2}d^{3} + b^{3}c^{2} + b^{3}d^{2}+\\ + c^{2}d^{3}+ c^{4}d + bc^{2}d^{2} + b^{2}c^{2}d\end{aligned}}{2c^{4} + 2bd^{3} - 2b^{2}d^{2} + 2b^{3}d + 4bc^{2}d} + \frac{-cd^{3} - b^{3}c + bcd^{2} + b^{2}cd}{c^{4} + bd^{3} - b^{2}d^{2} + b^{3}d + 2bc^{2}d}\cdot x,
\end{equation}
\begin{equation}\beta_c: \, y = \frac{d^{4} + a^{2}c^{2} + a^{2}d^{2} + c^{2}d^{2}}{2a^{2}d + 2c^{2}d - 4acd} + \frac{-d^{4} - ac^{3} + a^{2}c^{2} - a^{3}c - 2acd^{2}}{a^{3}d + c^{3}d - ac^{2}d - a^{2}cd}\cdot x,
\end{equation}
\begin{equation}\beta_d: \, y = \frac{\begin{aligned}a^{2}b^{3} + a^{2}d^{3} + a^{4}b + a^{4}d+\\ + b^{2}d^{3} + b^{3}d^{2} + a^{2}bd^{2} + a^{2}b^{2}d\end{aligned}}{2a^{4} + 2bd^{3} - 2b^{2}d^{2} + 2b^{3}d + 4a^{2}bd} + \frac{-ab^{3} - ad^{3} + abd^{2} + ab^{2}d}{a^{4} + bd^{3} - b^{2}d^{2} + b^{3}d + 2a^{2}bd}\cdot x.
\end{equation}
We get the intersections
\begin{equation}M=\beta_a\cap\beta_c=\left(\frac{\begin{aligned}-a^{3}bd + a^{3}c^{2} - a^{2}bcd + a^{2}c^{3}-\\ - ab^{3}d - ab^{2}d^{2} - abc^{2}d - abd^{3}-\\ - b^{3}cd - b^{2}cd^{2} - bc^{3}d - bcd^{3}\end{aligned}}{\begin{aligned}2a^{3}c - 2a^{2}c^{2} - 4abcd+\\ + 2ac^{3} - 2b^{3}d - 2b^{2}d^{2} - 2bd^{3}\end{aligned}}, \frac{\begin{aligned}a^{3}bc + a^{3}cd + a^{2}bc^{2} + a^{2}c^{2}d+\\ + ab^{3}c + ab^{2}cd+ abc^{3}+ abcd^{2} +\\+ ac^{3}d + acd^{3} - b^{3}d^{2} - b^{2}d^{3}\end{aligned}}{\begin{aligned}2a^{3}c - 2a^{2}c^{2} - 4abcd+\\ + 2ac^{3} - 2b^{3}d - 2b^{2}d^{2} - 2bd^{3}\end{aligned}} \right),
\end{equation}
\begin{equation}N=\beta_b\cap\beta_d=\left(\frac{\begin{aligned}-a^{3}bd + a^{3}c^{2} - a^{2}bcd + a^{2}c^{3}-\\ - ab^{3}d - ab^{2}d^{2}-abc^{2}d - abd^{3}-\\ - b^{3}cd - b^{2}cd^{2} - bc^{3}d - bcd^{3}\end{aligned}}{\begin{aligned}2a^{3}c + 2a^{2}c^{2} + 4abcd+\\ + 2ac^{3} - 2b^{3}d + 2b^{2}d^{2} - 2bd^{3}\end{aligned}}, \frac{\begin{aligned}a^{3}bc + a^{3}cd + a^{2}bc^{2} + a^{2}c^{2}d+\\ + ab^{3}c + ab^{2}cd +abc^{3} + abcd^{2}+\\ + ac^{3}d + acd^{3} - b^{3}d^{2} - b^{2}d^{3}\end{aligned}}{\begin{aligned}2a^{3}c + 2a^{2}c^{2} + 4abcd+\\ + 2ac^{3} - 2b^{3}d + 2b^{2}d^{2} - 2bd^{3}\end{aligned}} \right),
\end{equation}
and then the line connecting points $M$ and $N$ is
\begin{equation}MN:\ y = \frac{\begin{aligned}b^{2}d^{3} + b^{3}d^{2} - abc^{3} - acd^{3} - ab^{3}c - ac^{3}d-\\ - a^{2}bc^{2} - a^{2}c^{2}d - a^{3}bc - a^{3}cd - abcd^{2} - ab^{2}cd\end{aligned}}{\begin{aligned}-a^{2}c^{3} - a^{3}c^{2} + abd^{3} + ab^{2}d^{2} + ab^{3}d + bcd^{3}+\\ + bc^{3}d + a^{3}bd + b^{2}cd^{2} + b^{3}cd + abc^{2}d + a^{2}bcd\end{aligned}}\cdot x.
\end{equation}
It is easy to see that $MN$ goes through $P(0,0)$. This completes the proof.
\end{proof}
\newpage

\section{Some others theorems on rectangle and a random point}

We introduce some more theorems on rectangle and a random point, all of them can be solved by Cartesian coordinate as we work above.

\begin{theorem}[Generalization of Theorem \ref{thm1}]Let $A_1B_1C_1D_1$ and $A_2B_2C_2D_2$ be two rectangles with the same center $I$. Let $P$ be a random point in its plane. Euler lines of triangles $PA_1B_1$ and $PC_1D_1$ meet at $Q$. Euler lines of triangles $PA_2D_2$ and $PB_2C_2$ meet at $R$. Then, three points $Q$, $R$, and $I$ are collinear (See Figure \ref{fig4a}).
\end{theorem}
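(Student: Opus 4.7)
The plan is to mimic the coordinate approach of Theorem~\ref{thm1} and then invoke one structural observation that makes the generalization almost automatic. First I would place Cartesian coordinates with $P=(0,0)$ and the coordinate axes aligned with the common axis directions of the two rectangles, about their shared center $I$. Each rectangle $i\in\{1,2\}$ then has vertices $A_i=(a_i,b_i)$, $B_i=(c_i,b_i)$, $C_i=(c_i,d_i)$, $D_i=(a_i,d_i)$, and the common-center hypothesis becomes the pair of identities $a_1+c_1=a_2+c_2=2I_x$ and $b_1+d_1=b_2+d_2=2I_y$.

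The structural observation is already visible inside the proof of Theorem~\ref{thm1}. The line computed in equation~(2.19),
\[
y=-\frac{b+d}{a+c}\,x+(b+d),
\]
which carries both Euler-line intersections $\epsilon_a\cap\epsilon_c$ and $\epsilon_b\cap\epsilon_d$, depends on the rectangle only through the sums $a+c$ and $b+d$. Rewriting it as $y=-\frac{I_y}{I_x}\,x+2I_y$ shows that one and the same line through $I$ serves \emph{every} axis-aligned rectangle with that center. Thus applying Theorem~\ref{thm1} to rectangle~$1$ places the present $Q=\epsilon_{a_1}\cap\epsilon_{c_1}$ on this line, and applying it to rectangle~$2$ places the present $R=\epsilon_{b_2}\cap\epsilon_{d_2}$ on the same line; since $I$ already lies on it, the three points $Q$, $R$, $I$ are collinear.

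For a self-contained verification in the computational style of Theorem~\ref{thm1}, I would substitute each rectangle's parameters into formulas~(2.17) and~(2.18) to obtain $Q$ and $R$ explicitly, then check that the displacement vectors $Q-I$ and $R-I$ are both scalar multiples of $(-I_x,I_y)$, with scalar $\tfrac{a_1c_1+b_1d_1}{2(3a_1c_1-b_1d_1)}$ for $Q$ and $\tfrac{a_2c_2+b_2d_2}{2(a_2c_2-3b_2d_2)}$ for $R$. No real obstacle is anticipated: the hard algebra was already carried out in the proof of Theorem~\ref{thm1}, and the new content of the present theorem is the observation that the line through $I$ on which these Euler-line intersections lie is invariant under changing a rectangle's dimensions while keeping its center fixed.
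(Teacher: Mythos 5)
The paper itself gives no proof of this theorem (Section 3 only remarks that all its statements ``can be solved by Cartesian coordinate as we work above''), so there is nothing to compare against line by line; your proposal is in exactly the intended spirit, and its key observation is a genuinely good one: equation (2.19) shows that, for a fixed $P$ and fixed side directions, the line carrying \emph{both} Euler-line intersections depends on the rectangle only through its center $I$, so $Q$ (from rectangle 1) and $R$ (from rectangle 2) land on one and the same line through $I$. Your explicit verification is also essentially right, except that the scalars are off by a factor of $2$: one finds $Q-I=\tfrac{a_1c_1+b_1d_1}{3a_1c_1-b_1d_1}(-I_x,I_y)$ and $R-I=\tfrac{a_2c_2+b_2d_2}{a_2c_2-3b_2d_2}(-I_x,I_y)$. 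This is harmless, since only proportionality to $(-I_x,I_y)$ is needed.

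The one substantive issue is your opening move: ``coordinate axes aligned with the common axis directions of the two rectangles'' silently adds the hypothesis that the two rectangles have parallel sides, which the statement as written does not impose. This is not a removable convenience. In a frame adapted to a single rectangle, the line (2.19) passes through $I$ with direction $(I_x',-I_y')$, i.e.\ it is the reflection of the direction $\vec{PI}$ in that rectangle's own side directions; so two concentric rectangles that are rotated relative to one another produce two \emph{different} lines through $I$, and $Q$, $I$, $R$ are then not collinear in general. A concrete check: take $P=(0,0)$, rectangle 1 with vertices $(1,1),(3,1),(3,4),(1,4)$ (so $I=(2,2.5)$ and $Q=(-0.8,6)$, giving $Q-I\parallel(4,-5)$), and for rectangle 2 the concentric square rotated by $45^\circ$ with vertices $(3,2.5),(2,3.5),(1,2.5),(2,1.5)$; a direct computation gives $R\approx(1.209,1.868)$, so $R-I\parallel(5,4)$, perpendicular to $Q-I$. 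So your proof is complete precisely for the parallel-sides reading of the theorem (which the figure presumably depicts, and which is what makes it reduce to Theorem \ref{thm1} when the rectangles coincide), but you should state that restriction explicitly rather than build it into the coordinate choice as if it were free.
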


\begin{figure}[htbp]
	\begin{center}\scalebox{0.7}{\includegraphics{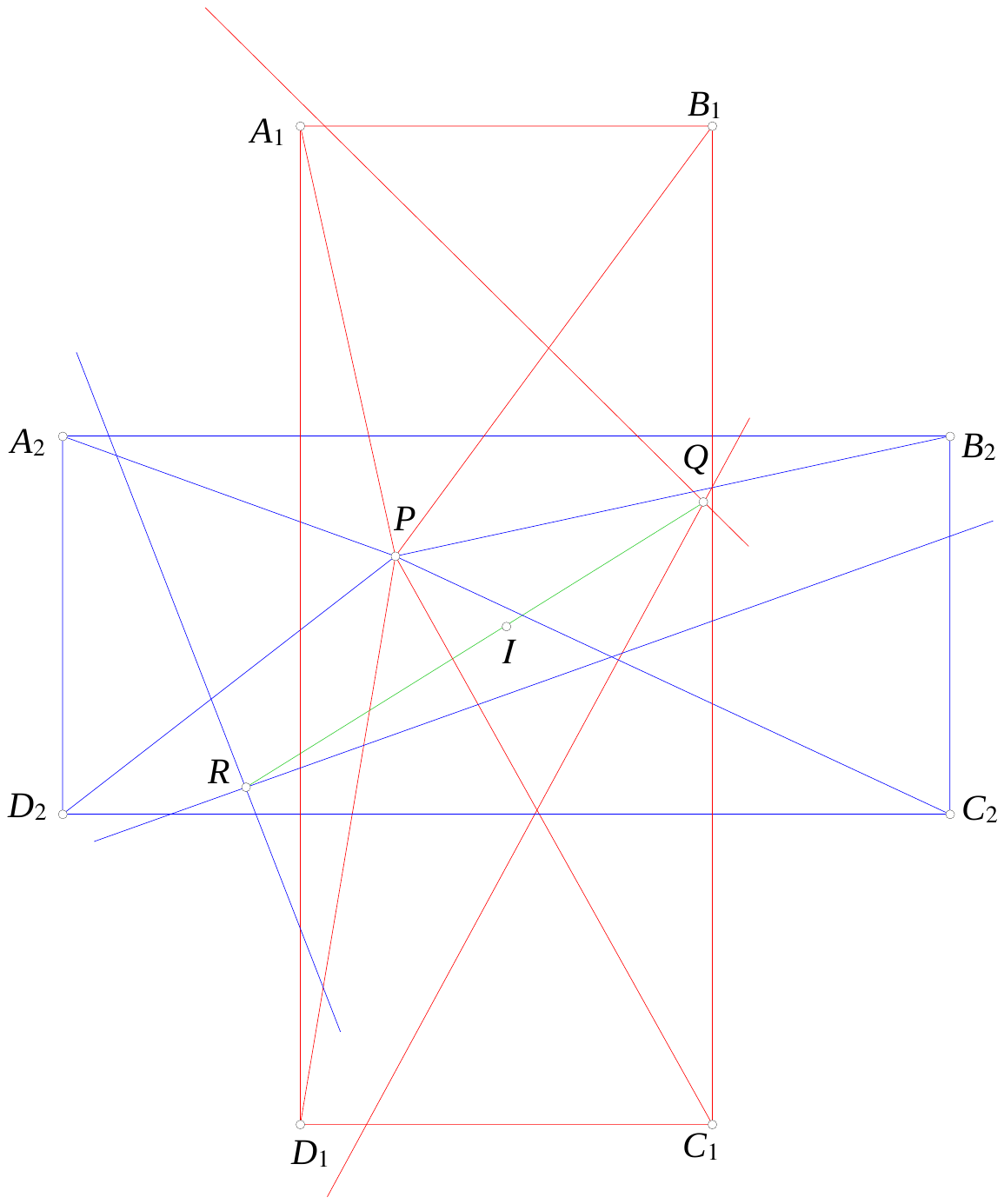}}\end{center}
	\caption{}
	\label{fig4a}
\end{figure}
\newpage

\begin{theorem}Let $ABCD$ be a rectangle with center $I$. Let $P$ be a random point in its plane. Let $H_a$, $H_b$, $H_c$, and $H_d$ be the orthocenters of triangles $PAB$, $PBC$, $PCD$, and $PDA$, respectively. Let $Q$ and $R$ be the midpoints of $H_aH_c$ and $H_bH_d$, respectively. Then, three points $Q$, $R$, and $I$ are collinear (See Figure \ref{fig4}).
\end{theorem}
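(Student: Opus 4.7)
The plan is to reuse the coordinate system from the proof of Theorem \ref{thm1}: place $P(0,0)$, $A(a,b)$, $B(c,b)$, $C(c,d)$, $D(a,d)$, so that $I=\left(\tfrac{a+c}{2},\tfrac{b+d}{2}\right)$. The centroids $G_a,G_b,G_c,G_d$ and circumcenters $O_a,O_b,O_c,O_d$ of the four triangles are already computed in Theorem \ref{thm1}, so the orthocenters come for free from the Euler relation $H = 3G - 2O$. No new hard computation is needed.

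The key geometric observation that keeps the algebra short is that in the triangles $PAB$ and $PCD$ the side opposite $P$ is horizontal, so the altitude from $P$ is the $y$-axis; hence $H_a$ and $H_c$ both have $x$-coordinate $0$. Symmetrically, in $PBC$ and $PDA$ the side opposite $P$ is vertical, so $H_b$ and $H_d$ have $y$-coordinate $0$. Applying $H=3G-2O$ with the formulas already in the text gives the clean expressions
\[
H_a=\Bigl(0,\tfrac{b^{2}+ac}{b}\Bigr),\quad H_c=\Bigl(0,\tfrac{d^{2}+ac}{d}\Bigr),\quad H_b=\Bigl(\tfrac{c^{2}+bd}{c},0\Bigr),\quad H_d=\Bigl(\tfrac{a^{2}+bd}{a},0\Bigr).
\]

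Taking midpoints, a short symmetric simplification yields
\[
Q=\Bigl(0,\tfrac{(b+d)(ac+bd)}{2bd}\Bigr),\qquad R=\Bigl(\tfrac{(a+c)(ac+bd)}{2ac},0\Bigr),
\]
so $Q$ lies on the $y$-axis and $R$ on the $x$-axis. The line $QR$ therefore admits the intercept form $\frac{x}{x_R}+\frac{y}{y_Q}=1$, and substituting $I=\left(\tfrac{a+c}{2},\tfrac{b+d}{2}\right)$ collapses immediately to $\tfrac{ac}{ac+bd}+\tfrac{bd}{ac+bd}=1$. This confirms $Q$, $R$, $I$ collinear. The only place one could stumble is failing to spot the axis-alignment that zeroes out half of the coordinates of the orthocenters; once that is observed, the rest is a one-line intercept check rather than the long expansions needed for the Brocard part of Theorem \ref{thm1}.
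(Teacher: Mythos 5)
Your proposal is correct and follows exactly the Cartesian-coordinate strategy the paper prescribes for the theorems of Section 3 (the paper gives no explicit computation for this one, only the remark that the method of Theorem \ref{thm1} applies); your orthocenter coordinates via $H=3G-2O$, the midpoints $Q$ and $R$, and the intercept-form check at $I$ all verify. The only caveat is the implicit nondegeneracy assumptions $a,b,c,d\neq 0$ and $ac+bd\neq 0$ (otherwise $Q=R=P$ and the line $QR$ is undetermined), which the paper's own proof of Theorem \ref{thm1} likewise leaves unstated.
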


\begin{figure}[htbp]
	\begin{center}\scalebox{0.7}{\includegraphics{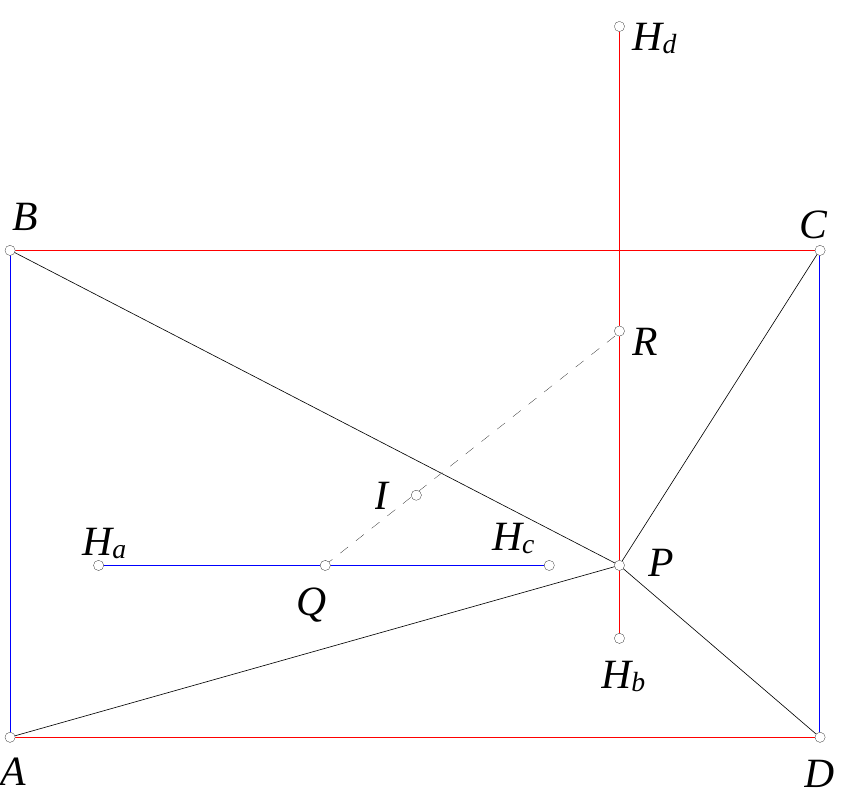}}\end{center}
	\caption{}
	\label{fig4}
\end{figure}
\newpage
\begin{theorem}Let $ABCD$ be a rectangle with center $I$. Let $P$ be a random point in its plane. Let $O_a$, $O_b$, $O_c$, and $O_d$ be the circumcenters of triangles $PAB$, $PBC$, $PCD$, and $PDA$, respectively. Let $Q$ and $R$ be the midpoints of $O_aO_c$ and $O_bO_d$, respectively. Let $S$ be the midpoint of $QR$. Circumcircles of triangles $PO_aO_c$ and $PO_bO_d$ meets again at $T$. Then, three points $S$, $T$, and $I$ are collinear (See Figure \ref{fig5}).
\end{theorem}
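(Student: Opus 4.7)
The plan is to use the same Cartesian framework as in Theorem~\ref{thm1}: take $P=(0,0)$, $A=(a,b)$, $B=(c,b)$, $C=(c,d)$, $D=(a,d)$, and recycle the circumcenters $O_a$, $O_b$, $O_c$, $O_d$ from equations (2.5)--(2.8). A pleasant structural observation makes the midpoint part of the configuration transparent: by (2.5) and (2.7), $O_a$ and $O_c$ share the $x$-coordinate $\tfrac{a+c}{2}$, so the line $O_aO_c$ is vertical and passes through $I=\bigl(\tfrac{a+c}{2},\tfrac{b+d}{2}\bigr)$; dually, by (2.6) and (2.8), $O_bO_d$ is horizontal and also passes through $I$. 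Hence $Q=\bigl(\tfrac{a+c}{2},Q_y\bigr)$ and $R=\bigl(R_x,\tfrac{b+d}{2}\bigr)$ where a short calculation gives $Q_y=\tfrac{(b+d)(bd-ac)}{4bd}$ and $R_x=\tfrac{(a+c)(ac-bd)}{4ac}$. The midpoint $S$ of $QR$ is then immediate, and the differences $I_x-S_x$ and $I_y-S_y$ both factor as $(a+c)(ac+bd)/(8ac)$ and $(b+d)(ac+bd)/(8bd)$ respectively, yielding the compact slope
\[
\operatorname{slope}(IS)=\frac{ac(b+d)}{bd(a+c)},
\]
so that the line $IS$ has the manageable equation $2bd(a+c)y-2ac(b+d)x=(a+c)(b+d)(bd-ac)$.

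Next I would compute $T$. Each of the two relevant circles passes through the origin, so has an equation of the form $x^2+y^2+Dx+Ey=0$. For the circumcircle of $PO_aO_c$, the chord $O_aO_c$ is vertical, so its centre lies on the horizontal perpendicular bisector $y=Q_y$, giving $E_1=-2Q_y$ for free; the remaining coefficient $D_1$ is then obtained by substituting $O_a$ into the circle equation. The coefficients $D_2,E_2$ of the second circle are obtained symmetrically, using that the centre of the circumcircle of $PO_bO_d$ lies on the vertical line $x=R_x$. Subtracting the two circle equations gives the radical axis of the two circles, which is precisely the line $PT$ through the origin, and intersecting this line back with (either) circle yields closed-form rational expressions for the coordinates of $T$ in $a,b,c,d$.

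The final step is to verify that $T$ satisfies the equation of line $IS$ displayed above; this can be done either by direct substitution or by expanding the $3\times 3$ collinearity determinant
\[
\det\begin{pmatrix} I_x & I_y & 1 \\ S_x & S_y & 1 \\ T_x & T_y & 1 \end{pmatrix}
\]
and showing it vanishes identically. The main obstacle will be, unsurprisingly, the size of the intermediate polynomial expressions: as in the second half of the proof of Theorem~\ref{thm1}, the coordinates of $T$ are ratios of polynomials of total degree around six or seven in $a,b,c,d$, and certifying the identity by hand is impractical. A computer algebra system handles the expansion in a few lines, and the recurrent factorizations $(a+c)$, $(b+d)$, $(ac+bd)$, $(ac-bd)$ that already appeared in $Q$, $R$, $S$ should continue to simplify the calculation at each stage.
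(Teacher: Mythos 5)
Your proposal is correct and follows exactly the route the paper intends: the paper states this theorem without a written proof, remarking only that all the Section~3 results ``can be solved by Cartesian coordinate as we work above,'' and your setup reuses the same coordinates and the circumcenters (2.5)--(2.8) verbatim. Your intermediate computations check out (in particular $Q_y=\tfrac{(b+d)(bd-ac)}{4bd}$, $R_x=\tfrac{(a+c)(ac-bd)}{4ac}$, and the slope $\tfrac{ac(b+d)}{bd(a+c)}$ of $IS$ are all correct), and the radical-axis device for locating $T$ together with a CAS verification of the final collinearity is a legitimate completion of the argument in the paper's spirit.
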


\begin{figure}[htbp]
	\begin{center}\scalebox{0.7}{\includegraphics{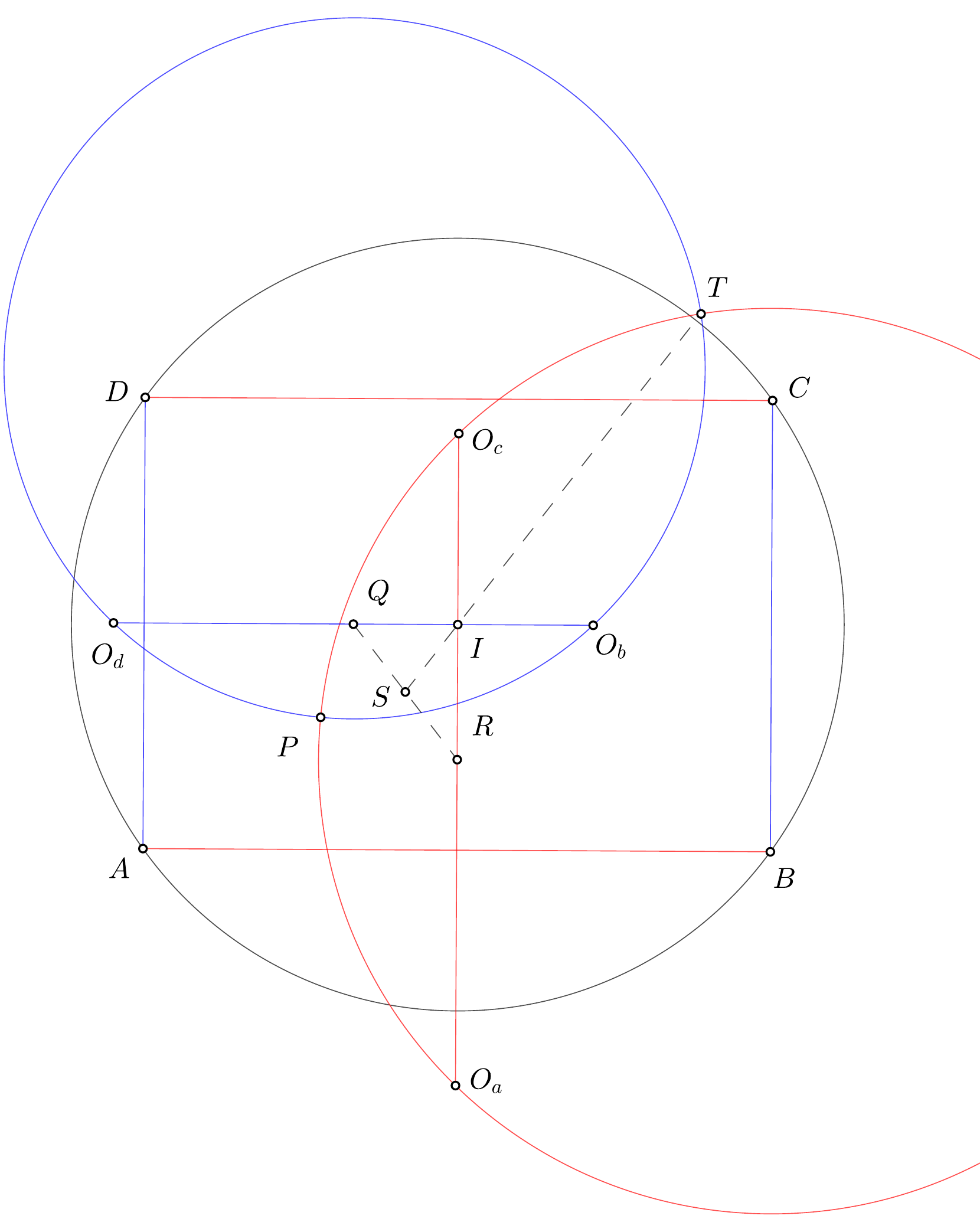}}\end{center}
	\caption{}
	\label{fig5}
\end{figure}
\newpage
\begin{theorem}Let $ABCD$ be a rectangle with center $I$. Let $P$ be a random point in its plane. Let $N_a$, $N_b$, $N_c$, and $N_d$ be the nine-point centers of triangles $PAB$, $PBC$, $PCD$, and $PDA$, respectively. Let $M$ and $N$ be the midpoints of $N_aN_c$ and $N_bN_d$, respectively. Let $Q$ be intersection of perpendicular bisectors of $N_aN_c$ and $N_bN_d$. Then, lines $MN$ and $IQ$ are parallel (See Figure \ref{fig6}).
\end{theorem}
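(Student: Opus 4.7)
The plan is to mirror the Cartesian-coordinate approach of Theorem \ref{thm1}, exploiting the data (circumcenters and centroids) already computed there so that the nine-point centers come for free. Set $P(0,0)$, $A(a,b)$, $B(c,b)$, $C(c,d)$, $D(a,d)$, so $I=\left(\tfrac{a+c}{2},\tfrac{b+d}{2}\right)$. Using the standard relation $N=\tfrac{1}{2}(3G-O)$ between the nine-point center, centroid, and circumcenter (equivalent to the nine-point center being the midpoint of $OH$), I will read off $N_a,N_b,N_c,N_d$ from the $G$'s and $O$'s listed in equations (2.5)--(2.12).

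The first key observation I will verify is a structural coincidence: $N_a$ and $N_c$ share the $x$-coordinate $\tfrac{a+c}{4}$, while $N_b$ and $N_d$ share the $y$-coordinate $\tfrac{b+d}{4}$. Hence $N_aN_c$ is a vertical segment and $N_bN_d$ is a horizontal segment. This immediately collapses the geometry: the perpendicular bisector of $N_aN_c$ is the horizontal line through $M$, the perpendicular bisector of $N_bN_d$ is the vertical line through $N$, and therefore $Q=(N_x,M_y)$. Explicitly, after simplification I expect
\[
M=\left(\tfrac{a+c}{4},\,\tfrac{(b+d)(ac+3bd)}{8bd}\right),\quad N=\left(\tfrac{(a+c)(3ac+bd)}{8ac},\,\tfrac{b+d}{4}\right),\quad Q=\left(\tfrac{(a+c)(3ac+bd)}{8ac},\,\tfrac{(b+d)(ac+3bd)}{8bd}\right).
\]

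With these closed forms the rest is a direct slope computation. I would compute $N_y-M_y$ and $N_x-M_x$, noticing that both carry a common factor $(ac+bd)$ (with opposite signs), so
\[
\text{slope}(MN)=-\frac{ac(b+d)}{bd(a+c)}.
\]
Similarly $Q-I$ has coordinates proportional to $(a+c)(bd-ac)/(8ac)$ and $(b+d)(ac-bd)/(8bd)$, which after the cancellation of the common factor $(ac-bd)$ gives
\[
\text{slope}(IQ)=-\frac{ac(b+d)}{bd(a+c)}.
\]
The two slopes coincide, so $MN\parallel IQ$.

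The main obstacle is not conceptual but bookkeeping: I must be careful that the coincidence forcing $N_aN_c$ vertical and $N_bN_d$ horizontal really drops out of the formulas (it does, because in $N=\tfrac{1}{2}(3G-O)$ the horizontal/vertical shared coordinates of the pairs $(G_a,G_c)$ and $(O_a,O_c)$ line up identically). I should also verify that $(ac+bd)$ and $(ac-bd)$ really do not vanish generically, so the degenerate cases (for instance $P$ lying on a diagonal of a suitable auxiliary rectangle) can be handled by continuity; these are the zero-measure configurations in which $Q$ is ill defined or $MN$ degenerates, and away from them the slope identity above completes the proof.
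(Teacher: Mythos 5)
Your proposal is correct: the formula $N=\tfrac12(3G-O)$ applied to the paper's equations (2.5)--(2.12) does give $N_a,N_c$ with common abscissa $\tfrac{a+c}{4}$ and $N_b,N_d$ with common ordinate $\tfrac{b+d}{4}$, and I verified that your closed forms for $M$, $N$, $Q$ and the resulting common slope $-\tfrac{ac(b+d)}{bd(a+c)}$ of $MN$ and $IQ$ all check out. This is exactly the Cartesian-coordinate route the paper prescribes for this theorem (which it states without a written proof, only noting it follows by the same method as Theorem \ref{thm1}), so your argument matches the intended approach while actually supplying the missing details.
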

\begin{figure}[htbp]
	\begin{center}\scalebox{0.7}{\includegraphics{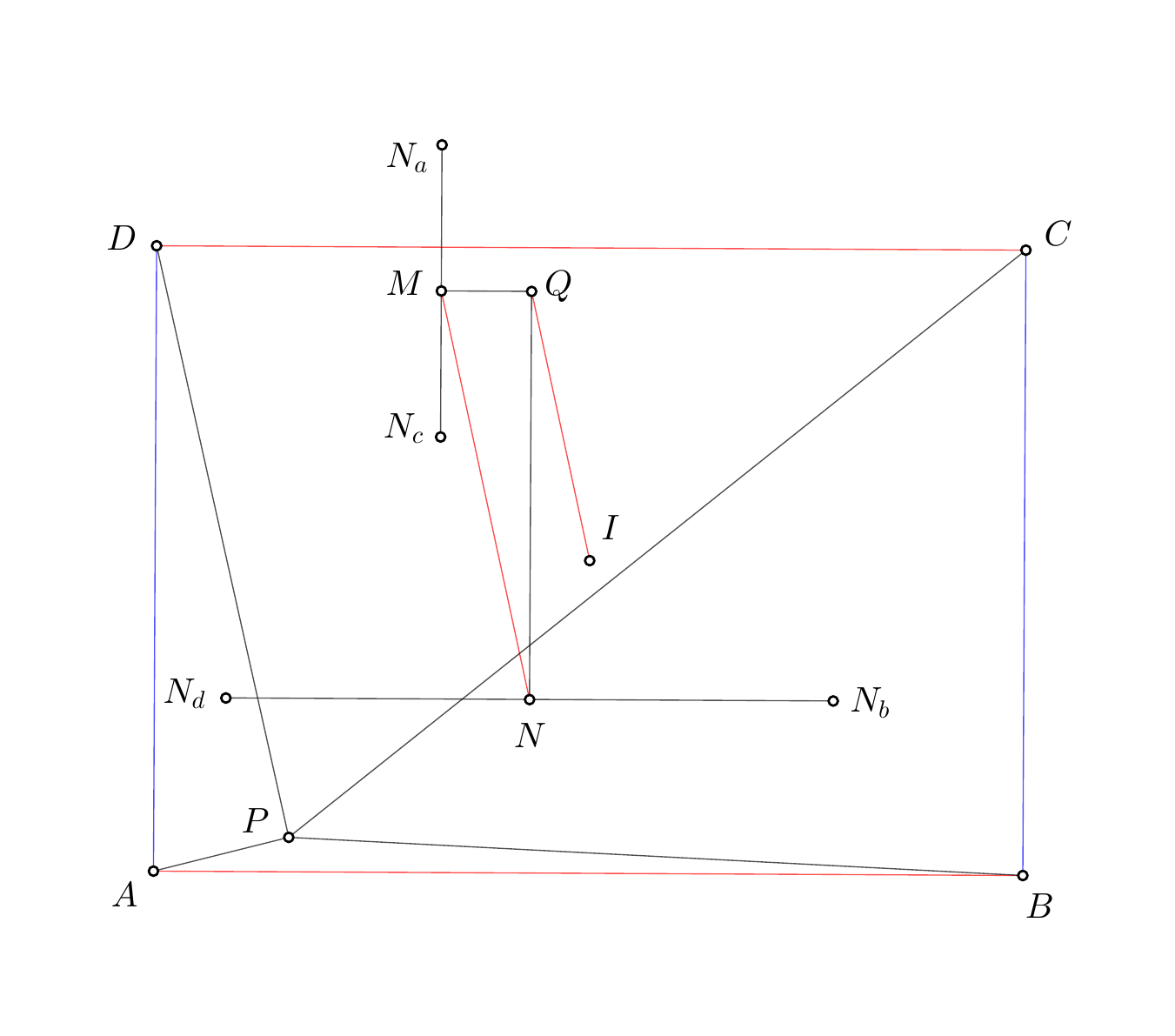}}\end{center}
	\caption{}
	\label{fig6}
\end{figure}

\newpage
\begin{theorem}Let $ABCD$ be a rectangle with center $I$. Let $P$ be a random point in its plane. Let $P_a$, $P_b$, $P_c$, and $P_d$ be the isogonal conjugate of $I$ with respect to triangles $PAB$, $PBC$, $PCD$, and $PDA$, respectively. Then, line $IP$ bisects the segments $P_aP_c$ and $P_bP_d$ (See Figure \ref{fig7}).
\end{theorem}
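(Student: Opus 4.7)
The plan is to use the same Cartesian approach as in Theorem~\ref{thm1}. Place $P=(0,0)$, $A=(a,b)$, $B=(c,b)$, $C=(c,d)$, $D=(a,d)$, so that $I=\bigl(\tfrac{a+c}{2},\tfrac{b+d}{2}\bigr)$ and the line $IP$ has equation $(b+d)\,x=(a+c)\,y$. The substitution $(a,b,c,d)\mapsto(c,d,a,b)$ fixes both $P$ and $I$ while swapping $PAB\leftrightarrow PCD$ and $PBC\leftrightarrow PDA$, hence $P_a\leftrightarrow P_c$ and $P_b\leftrightarrow P_d$; this cuts the symbolic work in half.

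For each triangle I compute the barycentric coordinates of $I$ by solving a small linear system, obtaining, for example,
\[
I\sim\bigl((b-d)(a-c)\,:\,ab-cd\,:\,ad-bc\bigr)
\]
with respect to $(P,A,B)$, and analogous expressions for the other three triangles. I then apply the isogonal-conjugate rule $(u:v:w)\mapsto(\ell_P^2/u:\ell_A^2/v:\ell_B^2/w)$ with side-length squares $(a-c)^2,\,b^2+c^2,\,a^2+b^2$ opposite $P,A,B$ respectively, clear denominators to get $P_a$ in unnormalized barycentrics, and convert back to Cartesian via the usual weighted-average formula; $P_b$ is handled the same way, and then $P_c,P_d$ follow from the swap.

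Finally, I form the midpoints $M_1=\tfrac12(P_a+P_c)$ and $M_2=\tfrac12(P_b+P_d)$ and check the condition $(b+d)\,x_{M_i}=(a+c)\,y_{M_i}$. Clearing denominators reduces each condition to a polynomial identity in $a,b,c,d$. The simplifications that make everything come out cleanly are the British-flag identity
\[
|PA|^2+|PC|^2=|PB|^2+|PD|^2=a^2+b^2+c^2+d^2,
\]
its companion
\[
|PB|^2\,|PC|^2-|PA|^2\,|PD|^2=-(a^2-c^2)(a^2+b^2+c^2+d^2),
\]
and the factorization $(ad-bc)^2-(ab-cd)^2=-(a^2-c^2)(b^2-d^2)$. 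Under these, both sides of the $M_1$-identity collapse to a common multiple of $(a-c)(a^2-c^2)(b^2-d^2)(ab-cd)(ad-bc)(a^2+b^2+c^2+d^2)$, which proves the collinearity of $I$, $M_1$, $P$. The $M_2$-case is either carried out the same way or, alternatively, deduced from the reflection $(x,y)\mapsto(y,x)$, which exchanges the horizontal-side triangles $PAB,PCD$ with the vertical-side ones $PBC,PDA$ and reduces it to the $M_1$-case under the substitution $(a,b,c,d)\mapsto(b,a,d,c)$.

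The principal obstacle is purely algebraic bookkeeping rather than conceptual: the rational expressions for $P_a$ have numerators and denominators of total degree six in $a,b,c,d$, so the polynomial identity to be verified after clearing is of degree roughly twelve. The three simplifications above keep the computation tractable by hand, and a computer-algebra check certifies the final identity instantaneously.
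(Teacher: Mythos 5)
Your proposal is correct and takes exactly the route the paper prescribes for this theorem (the paper supplies no written proof beyond the remark that the Cartesian method of Theorem~\ref{thm1} applies): you work in the frame $P=(0,0)$, $A=(a,b)$, $B=(c,b)$, $C=(c,d)$, $D=(a,d)$, and your key intermediate facts check out --- the barycentric coordinates $I\sim\bigl((b-d)(a-c):ab-cd:ad-bc\bigr)$ with respect to $(P,A,B)$, the side-square list $(a-c)^2,\,b^2+c^2,\,a^2+b^2$, the auxiliary identities, and the final collinearity all verify on a numerical instance. The two symmetry substitutions $(a,b,c,d)\mapsto(c,d,a,b)$ and $(a,b,c,d)\mapsto(b,a,d,c)$ are legitimate (the latter because the reflection $(x,y)\mapsto(y,x)$ commutes with taking isogonal conjugates and swaps the two pairs of triangles), so the reduction of the $P_bP_d$ case to the $P_aP_c$ case is sound.
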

\begin{figure}[htbp]
	\begin{center}\scalebox{0.7}{\includegraphics{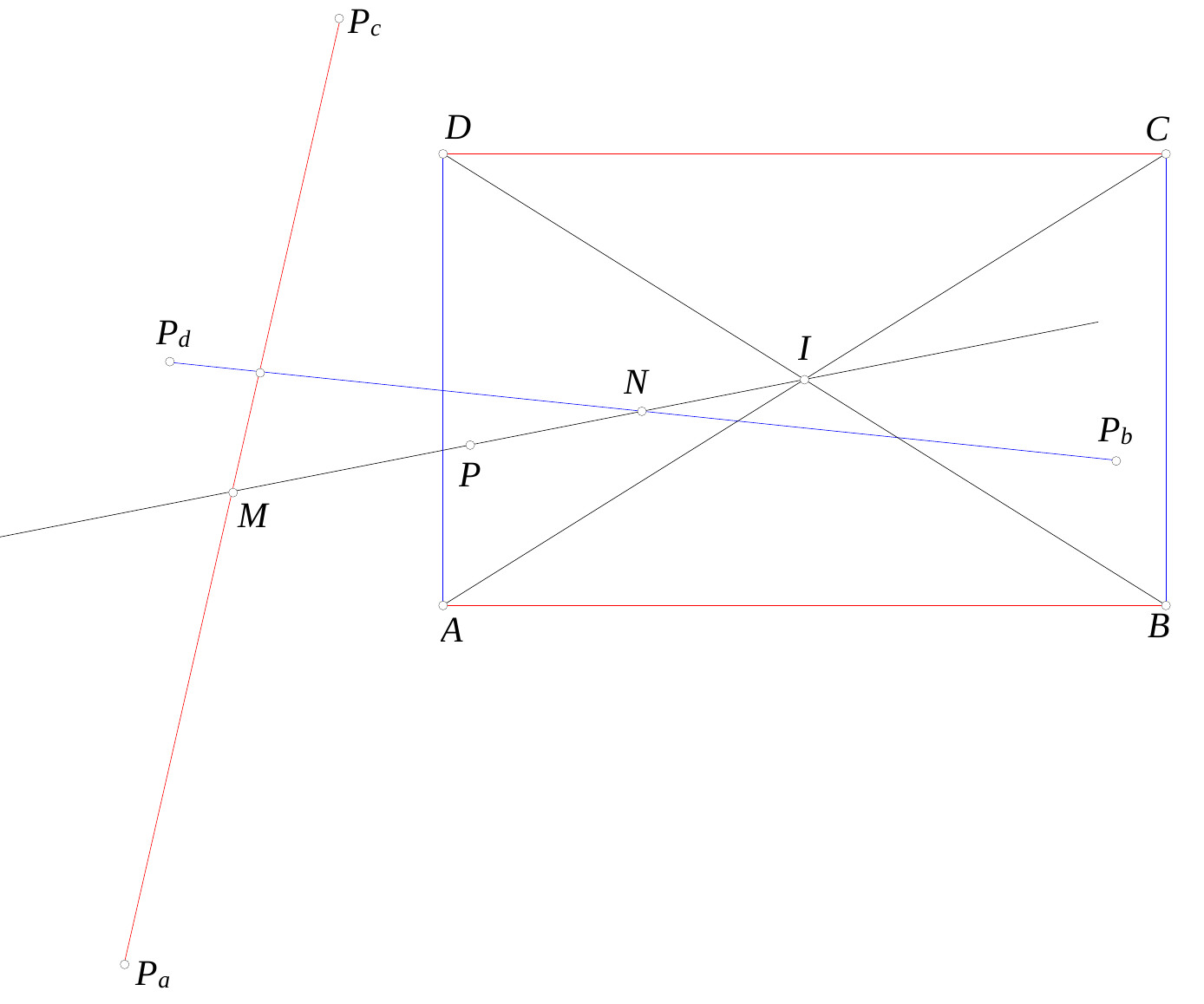}}\end{center}
	\caption{}
	\label{fig7}
\end{figure}
\newpage
\begin{theorem}Let $ABCD$ be a rectangle with center $I$. Let $P$ be a random point in its plane. Let $P_a$, $P_b$, $P_c$, and $P_d$ be the reflections of $P$ in the lines $AB$, $BC$, $CD$, and $DA$, respectively. Euler line of triangles $P_aAB$ and $P_cCD$ meet at $Q$. Euler line of triangles $P_bBC$ and $P_dDA$ meet at $R$. Then, line $IP$ bisects the segment $QR$ (See Figure \ref{fig8}).
\end{theorem}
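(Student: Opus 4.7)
The approach mirrors part~(1) of the proof of Theorem~\ref{thm1}: place $P=(0,0)$, $A=(a,b)$, $B=(c,b)$, $C=(c,d)$, $D=(a,d)$, so the four side lines of the rectangle are $y=b$, $x=c$, $y=d$, $x=a$, and the reflections are
\[
P_a=(0,2b),\quad P_b=(2c,0),\quad P_c=(0,2d),\quad P_d=(2a,0).
\]
Since $I=\bigl(\tfrac{a+c}{2},\tfrac{b+d}{2}\bigr)$, line $IP$ has equation $y=\tfrac{b+d}{a+c}x$, so the conclusion that $IP$ bisects $QR$ reduces to the single linear identity $(a+c)(y_Q+y_R)=(b+d)(x_Q+x_R)$.

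The key observation that avoids computing four Euler lines from scratch is that $\triangle P_aAB$ is the mirror image of $\triangle PAB$ across the line $AB$, so its centroid, circumcenter, and orthocenter are the reflections of those of $\triangle PAB$ in $y=b$. Consequently the Euler line of $\triangle P_aAB$ is the reflection of $\epsilon_a$ from equation~(2.13) across $y=b$, obtained simply by the substitution $y\mapsto 2b-y$. Analogously, the Euler lines of $\triangle P_cCD$, $\triangle P_bBC$, $\triangle P_dDA$ arise from $\epsilon_c$, $\epsilon_b$, $\epsilon_d$ via $y\mapsto 2d-y$, $x\mapsto 2c-x$, $x\mapsto 2a-x$, leaving each equation in the same linear form as its parent. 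For instance, one obtains
\[
\epsilon_a':\ y=\frac{b^{2}-ac}{b}+\frac{b^{2}+3ac}{b(a+c)}\,x.
\]

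Solving the two linear systems $Q=\epsilon_a'\cap\epsilon_c'$ and $R=\epsilon_b'\cap\epsilon_d'$ then yields compact closed forms of the shape
\[
Q=\left(\frac{(a+c)(ac+bd)}{3ac-bd},\ \frac{4ac(b+d)}{3ac-bd}\right),\ R=\left(\frac{4bd(a+c)}{3bd-ac},\ \frac{(b+d)(ac+bd)}{3bd-ac}\right),
\]
related by the symmetric-function swap $a+c\leftrightarrow b+d$, $ac\leftrightarrow bd$ combined with $x\leftrightarrow y$. Substituting these into $(a+c)(y_Q+y_R)=(b+d)(x_Q+x_R)$ and placing both sides over the common denominator $(3ac-bd)(3bd-ac)$ reduces the claim to the elementary identity
\[
\frac{4ac}{3ac-bd}+\frac{ac+bd}{3bd-ac}=\frac{ac+bd}{3ac-bd}+\frac{4bd}{3bd-ac},
\]
whose difference telescopes to $\tfrac{3ac-bd}{3ac-bd}+\tfrac{ac-3bd}{3bd-ac}=1-1=0$.

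The main obstacle is purely bookkeeping: carrying the four reflected Euler-line equations through the two linear intersections without sign or factor errors. The reflection shortcut does the conceptual work, and the horizontal/vertical symmetry halves the effort by letting $R$ be read off from $Q$ by the swap indicated above, so the remainder of the computation is no longer than part~(1) of Theorem~\ref{thm1}.
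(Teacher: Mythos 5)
Your proof is correct and uses exactly the Cartesian-coordinate method the paper prescribes for its Section 3 theorems (for which it supplies no written proof): I verified the reflected Euler lines, the closed forms $Q=\bigl(\tfrac{(a+c)(ac+bd)}{3ac-bd},\tfrac{4ac(b+d)}{3ac-bd}\bigr)$ and $R=\bigl(\tfrac{4bd(a+c)}{3bd-ac},\tfrac{(b+d)(ac+bd)}{3bd-ac}\bigr)$, and the final identity, and all check out (modulo the same genericity assumptions on denominators that the paper itself makes in Theorem \ref{thm1}). The reflection shortcut $y\mapsto 2b-y$, etc., which reuses equations (2.13)--(2.16), together with the $x\leftrightarrow y$, $ac\leftrightarrow bd$, $a+c\leftrightarrow b+d$ symmetry that lets $R$ be read off from $Q$, is a genuine economy over recomputing four Euler lines from scratch.
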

\begin{figure}[htbp]
	\begin{center}\scalebox{0.7}{\includegraphics{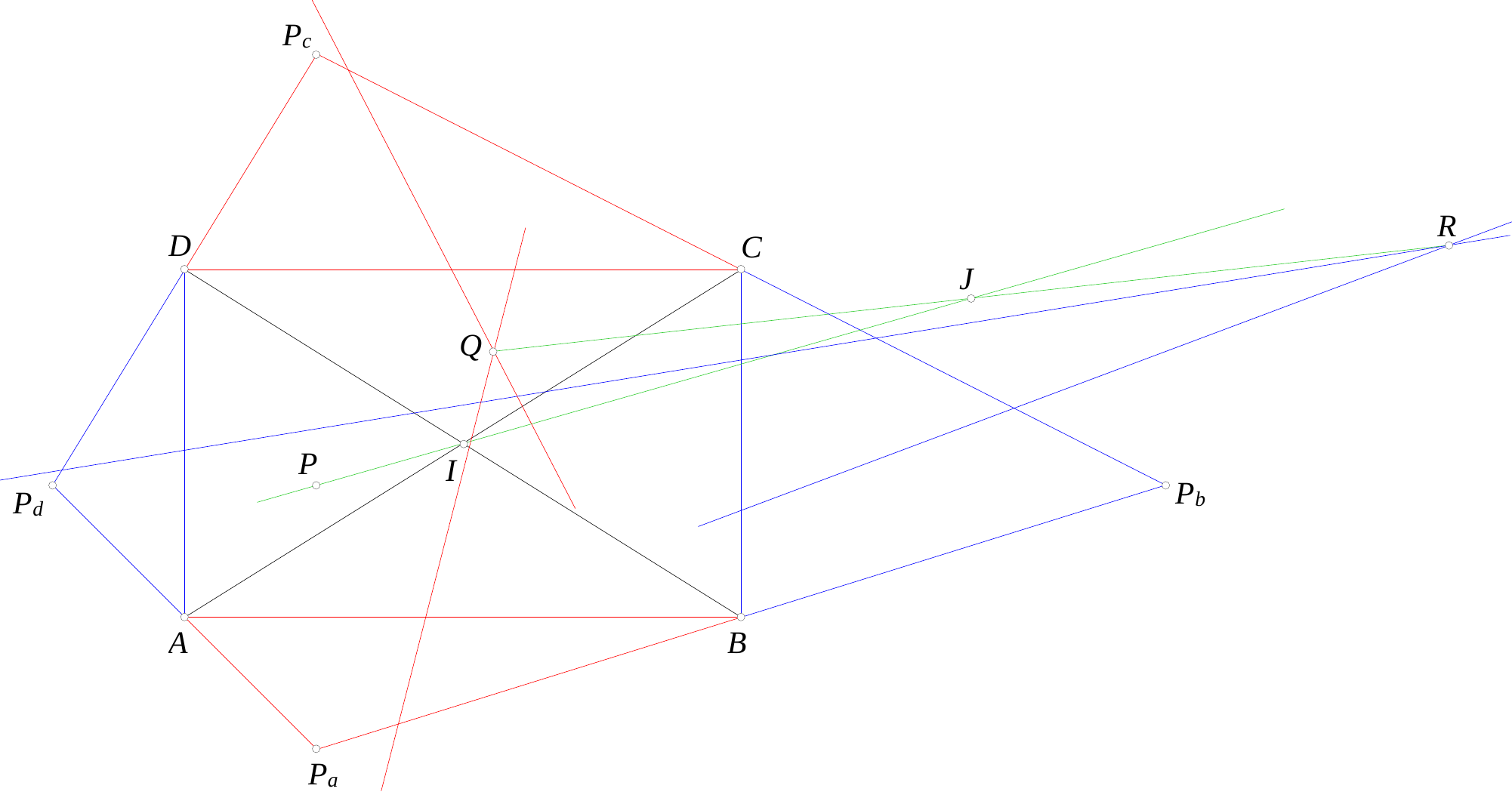}}\end{center}
	\caption{}
	\label{fig8}
\end{figure}
\newpage
\begin{theorem}Let $ABCD$ be a rectangle with center $I$. Let $P$ be a random point in its plane. Let $P_a$, $P_b$, $P_c$, and $P_d$ be the reflections of $P$ in the midpoints of sides $AB$, $BC$, $CD$, and $DA$, respectively. Euler line of triangles $P_aAB$ and $P_cCD$ meet at $Q$. Euler line of triangles $P_bBC$ and $P_dDA$ meet at $R$. Then, reflection of $P$ in $I$ lies on line $QR$ (See Figure \ref{fig9}).
\end{theorem}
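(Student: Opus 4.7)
The plan is to mirror the Cartesian coordinate method of Theorem~\ref{thm1}. With the same setup $P=(0,0)$, $A=(a,b)$, $B=(c,b)$, $C=(c,d)$, $D=(a,d)$, the four midpoint-reflections are $P_a=(a+c,2b)$, $P_b=(2c,b+d)$, $P_c=(a+c,2d)$, $P_d=(2a,b+d)$, and the reflection of $P$ in $I$ is $P'=(a+c,b+d)$.

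The key observation is that triangle $P_aAB$ is the $180^\circ$ rotation of triangle $PAB$ about the midpoint $\left(\frac{a+c}{2},b\right)$ of $AB$: the map $(x,y)\mapsto(a+c-x,2b-y)$ sends $P\mapsto P_a$, $A\mapsto B$, $B\mapsto A$. Since this rotation preserves slopes, the Euler line $\epsilon_a'$ of $P_aAB$ can be read off from the formula for $\epsilon_a$ in Theorem~\ref{thm1} by a direct substitution, giving
\[
\epsilon_a':\ y=\frac{2(b^2+ac)}{b}-\frac{b^2+3ac}{b(a+c)}x,
\]
and analogous formulas provide $\epsilon_b'$, $\epsilon_c'$, $\epsilon_d'$ by rotating about the other three side midpoints.

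Rather than compute $Q$ and $R$ explicitly, I would clear denominators in $\epsilon_a'$ and $\epsilon_c'$ to obtain
\[
(b^2+3ac)x+b(a+c)y=2(a+c)(b^2+ac),\quad (d^2+3ac)x+d(a+c)y=2(a+c)(d^2+ac),
\]
and subtract: the difference factors through $b-d$ and reduces to
\[
(b+d)x+(a+c)y=2(a+c)(b+d).
\]
Any intersection point of $\epsilon_a'$ and $\epsilon_c'$, in particular $Q$, therefore satisfies this linear relation. The same manipulation applied to $\epsilon_b'$ and $\epsilon_d'$ factors through $c-a$ and yields the identical equation, so $R$ lies on the same line, and $P'=(a+c,b+d)$ trivially satisfies $(b+d)(a+c)+(a+c)(b+d)=2(a+c)(b+d)$. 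The main obstacle is spotting this common linear combination; once the symmetric reduction is noticed, the collinearity of $Q$, $R$, $P'$ drops out in a single line and bypasses the more intricate explicit computation of the two intersection points.
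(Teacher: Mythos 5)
Your proof is correct. Note that the paper supplies no written proof of this theorem: Section 3 only remarks that all of its statements can be solved by Cartesian coordinates ``as we work above,'' i.e., by the brute-force computation of Theorem \ref{thm1} (recompute circumcenters and centroids of the four new triangles, form the Euler lines, intersect to get $Q$ and $R$ explicitly, then verify collinearity with $(a+c,\,b+d)$). Your argument lives in the same coordinate framework but genuinely shortens it in two ways. First, the half-turn about the midpoint of $AB$ carries the vertex set $\{P,A,B\}$ to $\{P_a,A,B\}$ and therefore carries Euler line to Euler line, so $\epsilon_a'$ is obtained from the paper's $\epsilon_a$ by the substitution $(x,y)\mapsto(a+c-x,\,2b-y)$ rather than recomputed from scratch; I verified that this yields exactly your $y=\tfrac{2(b^2+ac)}{b}-\tfrac{b^2+3ac}{b(a+c)}x$, and the analogous half-turns handle the other three triangles. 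Second, the subtraction trick works as claimed: the difference of the cleared forms of $\epsilon_a'$ and $\epsilon_c'$ is $(b^2-d^2)x+(b-d)(a+c)y=2(a+c)(b^2-d^2)$, which divides by $b-d\neq 0$ to give $(b+d)x+(a+c)y=2(a+c)(b+d)$, and the pair $\epsilon_b'$, $\epsilon_d'$ reduces (after dividing by $c-a\neq 0$) to the \emph{same} line, on which $(a+c,\,b+d)$ visibly lies; so $Q$, $R$, and the reflection of $P$ in $I$ are collinear without either intersection point ever being computed. What your route buys is a one-line conceptual reason (a common member of the two pencils of lines) in place of heavy polynomial algebra; what it costs is nothing beyond the genericity assumptions already implicit in the paper's own computations (nonvanishing denominators and $Q\neq R$ so that line $QR$ is defined).
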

\begin{figure}[htbp]
	\begin{center}\scalebox{0.7}{\includegraphics{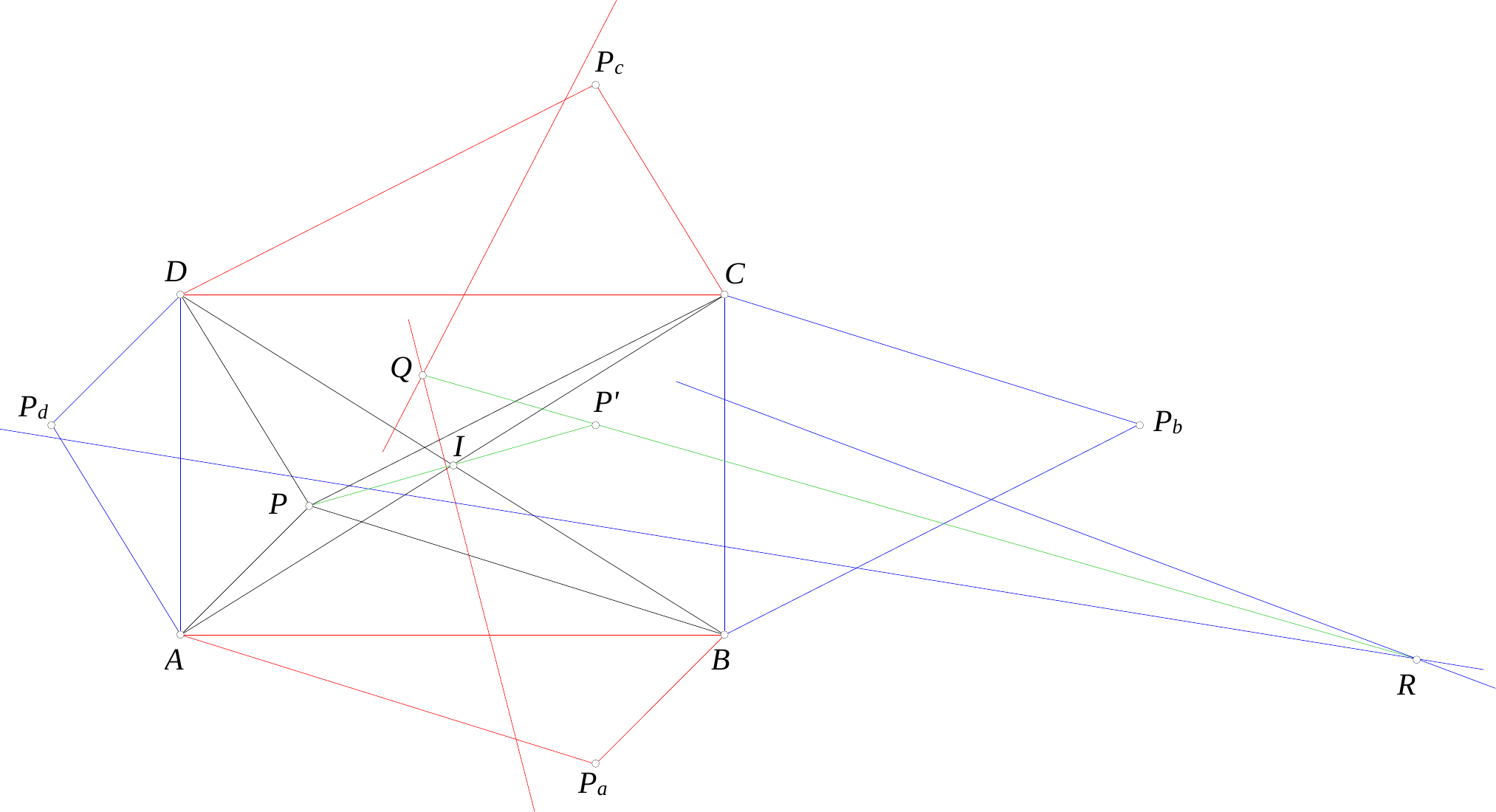}}\end{center}
	\caption{}
	\label{fig9}
\end{figure}
\newpage
\begin{theorem}Let $ABCD$ be a rectangle with center $I$. Let $P$ be a random point in its plane. Let $P_a$, $P_b$, $P_c$, $P_d$, $P_{ac}$, and $P_{bd}$ be the reflections of $P$ in the lines $AB$, $BC$, $CD$, $DA$, $AC$, and $BD$, respectively. Orthogonal projections of $P_{ac}$ on sides of quadrilateral $P_aP_bP_cP_d$ form a quadrilateral which has two diagonals meet at $Q$. Orthogonal projections of $P_{bd}$ on sides of quadrilateral $P_aP_bP_cP_d$ form a quadrilateral which has two diagonals meet at $R$ (See Figure \ref{fig10}). Then,
\begin{itemize}
\item[i)] Two points $P_{ac}$ and $P_{bd}$ are isogonal conjugate with respect to quadrilateral $P_aP_bP_cP_d$.
\item[ii)] There point $P$, $Q$, and $R$ are collinear.
\item[iii)] Two lines $QP_{ac}$ and $RP_{bd}$ are parallel.
\end{itemize}
\end{theorem}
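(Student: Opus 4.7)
The plan is to imitate the Cartesian method of Theorem~\ref{thm1}. Set $P=(0,0)$ and $A=(a,b)$, $B=(c,b)$, $C=(c,d)$, $D=(a,d)$, so the four side--reflections are $P_a=(0,2b)$, $P_b=(2c,0)$, $P_c=(0,2d)$, $P_d=(2a,0)$; reflection in the lines $AC$ and $BD$ then yields
\[
P_{ac}=\tfrac{2(ad-bc)}{(a-c)^2+(b-d)^2}(d-b,\,a-c),\qquad P_{bd}=\tfrac{2(cd-ab)}{(a-c)^2+(b-d)^2}(d-b,\,c-a).
\]
Two structural remarks drive the proof. First, the diagonals of $P_aP_bP_cP_d$ are the coordinate axes and meet at $P$, and $A,B,C,D$ are the midpoints of its sides. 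Second, the configuration admits two involutions, $(a\leftrightarrow c)$ and $(b\leftrightarrow d)$: each preserves the rectangle, exchanges the diagonals $AC\leftrightarrow BD$, and therefore swaps $P_{ac}\leftrightarrow P_{bd}$ while permuting $\{P_a,P_b,P_c,P_d\}$. These symmetries will halve every computation that follows.

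For part~(i), I would work in $\mathbb{C}$, using that the isogonal--conjugacy condition at vertex $P_a$ of the quadrilateral is
\[
\frac{(P_{ac}-P_a)(P_{bd}-P_a)}{(P_b-P_a)(P_d-P_a)}\in\mathbb{R}_{>0}.
\]
Writing $P_b-P_a=2(c-bi)$ and $P_d-P_a=2(a-bi)$, the denominator is $4\bigl[(ac-b^2)-b(a+c)i\bigr]$. Direct expansion of the numerator, combined with the polynomial identity
\[
(ad-bc)(cd-ab)+bd(a-c)^2=ac(b-d)^2,
\]
collapses the numerator to $\tfrac{4(b-d)^2}{(a-c)^2+(b-d)^2}\bigl[(ac-b^2)-b(a+c)i\bigr]$, so the ratio equals the positive real $(b-d)^2/[(a-c)^2+(b-d)^2]$. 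An analogous expansion at $P_b$ yields $(a-c)^2/[(a-c)^2+(b-d)^2]$, and the conditions at $P_c,P_d$ then follow from the two involutions.

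For parts~(ii) and~(iii), write the four side--lines of $P_aP_bP_cP_d$ in standard form (for instance $P_aP_b\colon bx+cy=2bc$ together with its cyclic analogues), drop perpendiculars from $P_{ac}$ and $P_{bd}$ to each to obtain the pedal quadrilaterals $F_1,F_2,F_3,F_4$ and $G_1,G_2,G_3,G_4$, and form $Q=F_1F_3\cap F_2F_4$ and $R=G_1G_3\cap G_2G_4$. Part~(ii) is then the determinant identity $Q_xR_y-Q_yR_x=0$ (since $P$ is the origin), and part~(iii) is the slope identity
\[
(Q_y-P_{ac,y})(R_x-P_{bd,x})=(Q_x-P_{ac,x})(R_y-P_{bd,y}).
\]

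The main obstacle is the bulk of the rational expressions for $Q$ and $R$, comparable to those for $M$ and $N$ in the proof of Theorem~\ref{thm1}. The decisive simplification is that each of the two involutions maps $Q\leftrightarrow R$, since it swaps $P_{ac}\leftrightarrow P_{bd}$ and permutes the four side--lines accordingly; consequently only one of $Q,R$ needs to be computed in full, and the two identities in~(ii)--(iii) must factor through expressions symmetric under the swap, reducing the verification to a tractable polynomial identity exactly in the spirit of the proof of Theorem~\ref{thm1}.
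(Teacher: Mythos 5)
The paper offers no explicit proof of this theorem---Section 3 only asserts that it ``can be solved by Cartesian coordinate as we work above''---and your proposal carries out exactly that program with $P$ at the origin: your reflection formulas for $P_a,\dots,P_{bd}$, the side-line equations such as $bx+cy=2bc$, and the key identity $(ad-bc)(cd-ab)+bd(a-c)^2=ac(b-d)^2$ are all correct, and the claimed isogonality ratios $(b-d)^2/[(a-c)^2+(b-d)^2]$ at $P_a$ and $(a-c)^2/[(a-c)^2+(b-d)^2]$ at $P_b$, as well as the collinearity and parallelism in parts ii) and iii), check out on test values. Your complex-number formulation of part i) and the two parameter involutions $a\leftrightarrow c$ and $b\leftrightarrow d$ (which indeed swap $P_{ac}\leftrightarrow P_{bd}$ and $Q\leftrightarrow R$) are sound labor-saving refinements the paper does not supply; the only thing still owed is the final polynomial verification for ii) and iii), which you assert rather than exhibit---precisely the level of detail at which the paper itself stops.
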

\begin{figure}[htbp]
	\begin{center}\scalebox{0.7}{\includegraphics{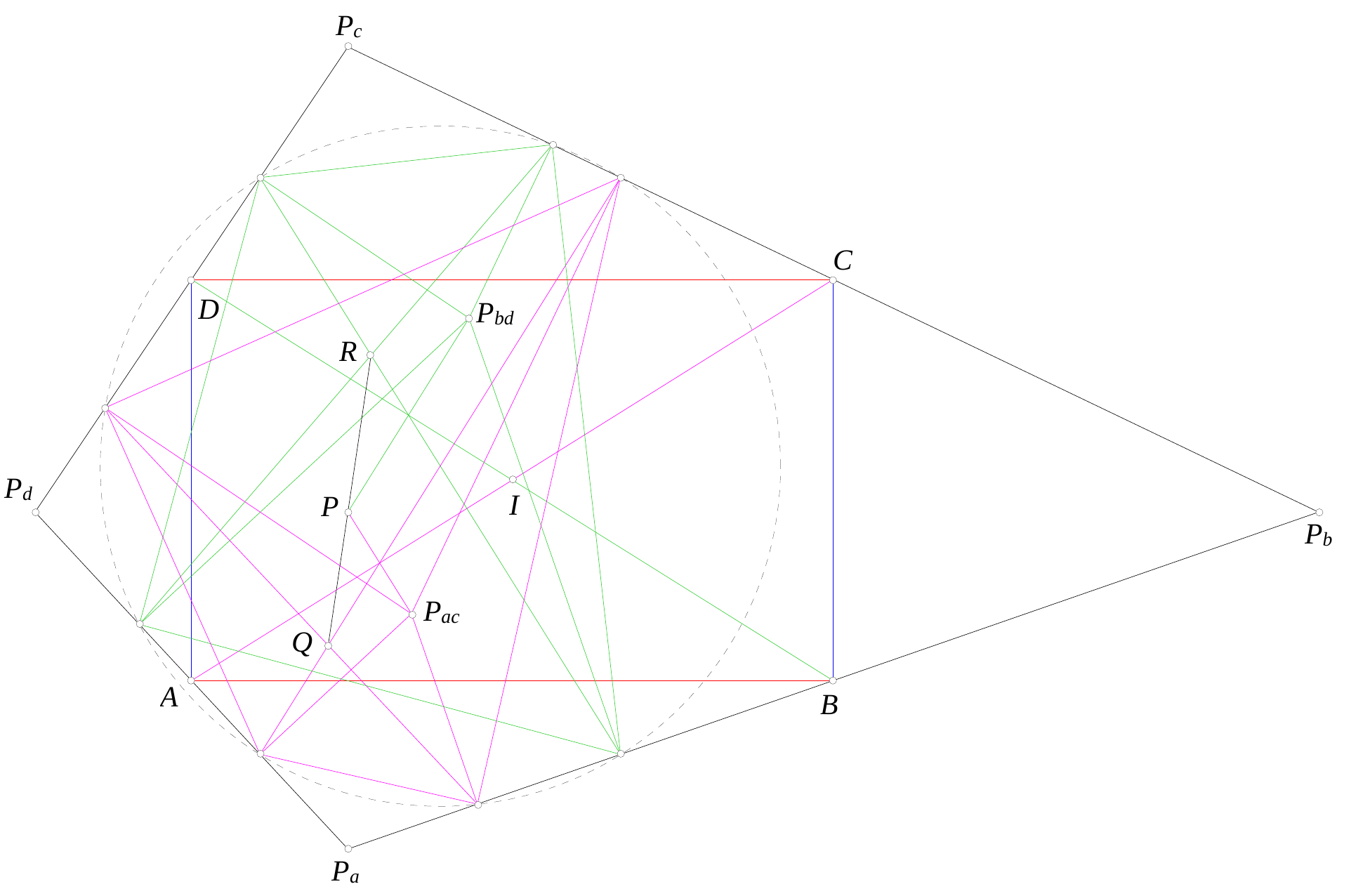}}\end{center}
	\caption{}
	\label{fig10}
\end{figure}
\newpage

\end{document}